\documentclass[12pt,letter]{amsart}

\usepackage{amssymb}
\usepackage{graphicx}
\usepackage{accents}
\usepackage[english,german]{babel}
\usepackage{color}
\usepackage{cancel}
\usepackage{url}

\setlength{\textwidth}{6in}
\setlength{\oddsidemargin}{.28in}
\setlength{\evensidemargin}{.28in}
\setlength{\textheight}{8.33in}
\setlength{\topmargin}{-.05in}

\newtheorem{theorem}{Theorem}[section]
\newtheorem{lemma}[theorem]{Lemma}
\newtheorem{Prop}[theorem]{Proposition}

\theoremstyle{definition}
\newtheorem{definition}[theorem]{Definition}

\theoremstyle{remark}
\newtheorem{remark}[theorem]{Remark}

\numberwithin{equation}{section}

\makeatletter
\newcommand*{\rom}[1]{\text{\expandafter\@slowromancap\romannumeral #1@}}
\makeatother
\newcommand{\R}{\mathbb{R}}

\newcommand{\Z}{\mathbb{Z}}

\newcommand{\Ric}{\operatorname{Ric}}

\newcommand{\Scal}{\operatorname{R}}
\newcommand{\tr}{\operatorname{tr}}

\newcommand{\mylap}[1]{{}^{#1}\!\triangle}
\newcommand{\my}[2]{{}^{#1}{#2}}

\newcommand{\free}[1]{\accentset{\,\circ}{#1}}

\newcommand{\spacetime}{(\mathfrak{L}^{4},\mathfrak{g})}

\newcommand{\photo}{P^{3}}
\newcommand{\Sphoto}{\overline{P}^{3}}

\newcommand{\slice}{M^{3}}

\newcommand{\surf}{\Sigma^{2}}
\newcommand{\glue}{\Sigma^{2}_{i}}
\newcommand{\schild}{Schwarz\-schild }

\newcommand{\spacet}{\mathfrak{L}^{4}}

\newcommand\beq{\begin{equation}}
\newcommand\eeq{\end{equation}}
\newcommand\ben{\begin{enumerate}}
\newcommand\een{\end{enumerate}}
\newcommand\bit{\begin{itemize}}
\newcommand\eit{\end{itemize}}

\begin{document}
\selectlanguage{english}
\title[Uniqueness of photon spheres]{Uniqueness of photon spheres via \\positive mass rigidity}
\author{Carla Cederbaum}
\address{Mathematics Department, Universit\"at T\"ubingen, Germany}
\email{cederbaum@math.uni-tuebingen.de}
\thanks{The first author is indebted to the Baden-W\"urttemberg Stiftung for the financial support of this research project by the Eliteprogramme for Postdocs.}

\author{Gregory J. Galloway}
\address{Mathematics Department, University of Miami, USA}
\email{galloway@math.miami.edu}
\thanks{The second author was partially supported by NSF grant DMS-1313724.}

\date{}

\begin{abstract}
In a recent paper the first author established the uniqueness of photon spheres, suitably defined, in static vacuum asymptotically flat spacetimes by adapting Israel's proof of static black hole uniqueness. In this note we establish uniqueness of photon spheres by adapting the argument of Bunting and Masood-ul-Alam~\cite{BMuA}, which then allows certain assumptions to be relaxed.   
In particular, multiple photon spheres are allowed a priori. 

As a consequence of our result, we can rule out the existence of static configurations involving multiple ``very compact'' bodies and black holes.\end{abstract}

\maketitle

\section{Introduction}\label{sec:intro}
The static spherically symmetric \schild black hole spacetime\footnote{The same formula still defines a \emph{\schild spacetime} if $m<0$. The corresponding metric is well-defined on $\overline{\mathfrak{L}}^{4}=\R\times(\R^{3}\setminus\lbrace0\rbrace)$ but possesses neither a black hole horizon nor a photon sphere. If $m=0$, the \schild spacetime degenerates to the Minkowski spacetime.} of mass $m>0$ can be represented as
\begin{align}
(\overline{\mathfrak{L}}^{4}:=\R\times(\R^{3}\setminus B_{2m}(0)),\overline{\mathfrak{g}}),
\end{align}
where the Lorentzian metric $\overline{\mathfrak{g}}$ is given by
\begin{align}\label{schwarzmetric}
\overline{\mathfrak{g}}&=-\overline{N}^{2}dt^{2}+\overline{N}^{-2}dr^{2}+r^{2}\Omega,\quad 
\overline{N}=\sqrt{1-\frac{2m}{r}},
\end{align}
with $\Omega$ denoting the canonical metric on $\mathbb{S}^{2}$. 
The black hole event horizon occurs at  $r = 2m$, where the static coordinates degenerate. The timelike submanifold $\Sphoto:=\R\times\mathbb{S}^{2}_{3m}=\lbrace r=3m\rbrace$ is called a \emph{photon sphere} because any null geodesic of $(\overline{\mathfrak{L}}^{4},\overline{\mathfrak{g}})$ that is initially tangent to $\Sphoto$ remains tangent to it. The \schild photon sphere thus models (an embedded submanifold ruled by) photons spiraling around the central black hole ``at a fixed distance''. 

The \schild photon sphere and the notion of trapped null geodesics in general are crucially relevant for questions of dynamical stability in the context of the Einstein equations. Moreover, photon spheres are related to the existence of relativistic images in the context of gravitational lensing. Please see \cite{CederPhoto,yazadjiev} and the references cited therein for more information on photon spheres.

To the best knowledge of the authors, it is mostly unknown whether more general spacetimes can possess (generalized) photon spheres, see p.\;838 of \cite{CVE}. Recently, the first author gave a geometric definition of photon spheres in static spacetimes and proved uniqueness of photon spheres in $3$-dimensional asymptotically flat static vacuum spacetimes under the assumption that the lapse function $N$ of the spacetime regularly foliates the region exterior to the photon sphere (Theorem 3.3 in \cite{CederPhoto}). The proof in \cite{CederPhoto} extends Israel's proof of black hole uniqueness \cite{Israel} to the context of photon spheres. In particular, as in Israel's proof,  the condition that the lapse function $N$ regularly foliates the region exterior to the black hole was required, and implies a priori that there is only one photon sphere in the spacetime. 

Adopting the definition of photon spheres in \cite{CederPhoto}, we will prove photon sphere uniqueness for $3+1$-dimensional  asymptotically flat static vacuum, or \emph{geometrostatic}, spacetimes $\spacetime$ without assuming that the lapse function $N$ regularly foliates the spacetime. In particular, we allow a priori the possibility of multiple photon spheres. 
To accomplish this we make use of the rigidity case of the Riemannian positive mass theorem (under the weaker regularity assumed in \cite{Bartnik}, see also \cite{LeeLefloch} and references cited therein), in a manner similar to the proof of black hole uniqueness in the static case due to Bunting and Masood-ul-Alam~\cite{BMuA}. 

This paper is organized as follows. In Section \ref{sec:setup and definition}, we will recall the definition and a few properties of photon spheres in geometrostatic spacetimes from \cite{CederPhoto}. In Section~\ref{sec:proof}, we will prove that the only geometrostatic spacetime admitting a photon sphere is the \schild spacetime:
{\renewcommand{\thetheorem}{\ref{thm:main}}
\begin{theorem}
Let $\spacetime$ be a geometrostatic spacetime that possesses a (possibly disconnected) photon sphere $(\photo,p)\hookrightarrow\spacetime$, arising as the inner boundary of~$\spacet$. Let $m$ denote the ADM-mass of $\spacetime$ and let $\mathfrak{H}:\photo\to\R$ denote the mean curvature of $(\photo,p)\hookrightarrow\spacetime$. Then $m=(\sqrt3\,\mathfrak{H})^{-1}$, with $\mathfrak{H}>0$, and $\spacetime$ is isometric to the region $\lbrace r\geq3m\rbrace$ exterior to the photon sphere $\lbrace r=3m\rbrace$ in the \schild spacetime of mass~$m$. In particular, $(\photo,p)$ is connected and a cylinder over a topological sphere.
\end{theorem}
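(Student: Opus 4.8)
The plan is to carry the Bunting--Masood-ul-Alam doubling argument over to photon spheres. Write $\mathfrak g=-N^2\,dt^2+g$ on $\R\times\slice$, so the static vacuum equations read $\Ric_g=N^{-1}\nabla^2N$ and $\Delta N=0$, whence $\Scal_g=0$. First I would recall from \cite{CederPhoto} the geometry along each component $\glue$ of the inner boundary: the lapse is a constant $c_i\in(0,1)$; $\glue$ is totally umbilic in $(\slice,g)$ with constant mean curvature $H_i>0$, computed with respect to the normal $\nu$ pointing toward the asymptotically flat end; the photon-sphere condition is $\partial_\nu N=\tfrac12 c_i H_i$, with $\mathfrak H|_{\glue}=\tfrac32 H_i$; and the Gauss equation combined with $\Delta N=0$ gives $\Scal_{\glue}=\tfrac32 H_i^2$, so $\glue$ is a round sphere of area radius $r_i=2/(\sqrt3\,H_i)$.

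Second, on $\slice$ I would introduce the conformal metrics $g_\pm:=\big(\tfrac{1\pm N}{2}\big)^4 g$. As $\tfrac{1\pm N}{2}$ is $g$-harmonic, both are scalar flat; from $N=1-m/r+O(r^{-2})$ one reads off that $(\slice,g_+)$ is asymptotically flat with vanishing ADM mass, while $\tfrac{1-N}{2}\sim m/(2r)$ shows $g_-$ conformally compactifies the end to a point, so $\Sslice_-:=(\slice,g_-)\cup\{\infty\}$ is a compact manifold, scalar flat away from that point, with boundary $\bigsqcup_i\glue$. On $\glue$ the $g_+$-induced metric is $\big(\tfrac{1+c_i}{2}\big)^4$ times the $g$-induced one, so I would glue onto each boundary sphere a cap carrying the \emph{same} induced metric and record the mean-curvature jump. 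A short computation using $\partial_\nu N=\tfrac12 c_i H_i$ shows that the distributional scalar curvature produced along $\glue$ is a nonnegative multiple of $(\sqrt3-1)-(3-\sqrt3)c_i$ if the cap is a Euclidean ball of the matching radius, and a nonnegative multiple of $3c_i^2-1$ if the cap is a constantly rescaled copy of $\Sslice_-$; both vanish (a smooth gluing) exactly at $c_i=1/\sqrt3$. So a flat ball is admissible iff $c_i\le 1/\sqrt3$, and the $\Sslice_-$-cap iff $c_i\ge 1/\sqrt3$.

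Reconciling this dichotomy over all components is, I expect, the crux, since the single manifold $\Sslice_-$ can be glued along all of $\bigsqcup_i\glue$ simultaneously only after a constant rescaling, which is incompatible with different $c_i$. I would resolve it by a quasi-local mass argument: were some $c_i>1/\sqrt3$, the outward boundary mean curvature of $\Sslice_-$ along $\glue$ would strictly exceed $2/\rho_i$, the value forced by the round induced metric, contradicting the Shi--Tam inequality for the compact scalar-flat manifold $\Sslice_-$. Here one must check, as in \cite{BMuA}, that the conformal point of $\Sslice_-$ carries no negative distributional scalar curvature, which uses $m>0$; the latter follows from $\Delta N=0$ and $\partial_\nu N=\tfrac12 c_i H_i>0$ by the divergence theorem. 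Hence $c_i\le 1/\sqrt3$ on every component, one caps all of $\bigsqcup_i\glue$ by Euclidean balls, and obtains a complete asymptotically flat manifold $\widehat M$ of vanishing ADM mass, scalar flat off the seams and with nonnegative distributional scalar curvature along them.

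Finally I would apply the rigidity case of the Riemannian positive mass theorem in the low-regularity (corner) setting available under the hypotheses of \cite{Bartnik} (see also \cite{LeeLefloch}): $\widehat M$ is isometric to $\R^3$, so $(\slice,g_+)$ is flat and $g=\big(\tfrac{1+N}{2}\big)^{-4}\delta$ is conformally flat on $\slice$. Feeding this back into $\Ric_g=N^{-1}\nabla^2N$ and $\Delta N=0$ --- the computation that closes the argument of \cite{BMuA} --- forces $(\slice,g,N)$ to be the region $\{r\ge 3\mu\}$ in the spatial \schild manifold of some mass $\mu>0$, with $\slice$ connected, and hence $\mathfrak g$ to be the \schild spacetime of mass $\mu$. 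Then the inner boundary $(\photo,p)$ is the \schild photon sphere $\{r=3\mu\}$: it is connected, a cylinder over $\mathbb S^2$, has $c=1/\sqrt3$ and $\mathfrak H=1/(\sqrt3\,\mu)>0$, and $\mu=(\sqrt3\,\mathfrak H)^{-1}=m$, as claimed.
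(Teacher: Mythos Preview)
Your strategy differs substantially from the paper's, and while several of your computations are correct, the argument has a genuine gap at the Shi--Tam step. The Shi--Tam inequality for a compact body $\Omega$ with nonnegative scalar curvature requires that \emph{every} boundary component have positive mean curvature with respect to the outward normal. On $\Sslice_-$, the outward mean curvature of $\glue$ is $\frac{4(3c_i-1)}{(1-c_i)^3}H_i$, which is positive only when $c_i>1/3$. So to invoke Shi--Tam at all you would need $c_i>1/3$ for \emph{all} $i$, not just for the offending component with $c_i>1/\sqrt3$. If the boundary values are mixed---say some $c_j\le 1/3$ and some $c_i>1/\sqrt3$---neither your Euclidean-ball capping (which needs $c_i\le 1/\sqrt3$) nor your Shi--Tam step (which needs all $c_j>1/3$) applies, and the argument stalls. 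You also need Shi--Tam to tolerate the lower regularity at the compactification point of $\Sslice_-$; this is plausible but would need justification beyond citing \cite{BMuA}.

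The paper sidesteps this dichotomy entirely by a different gluing: before any conformal change, it attaches to each $\glue$ the Schwarzschild ``neck'' of mass $\mu_i=r_i/3$ (the region between horizon and photon sphere). Proposition~\ref{prop} forces the induced metric, second fundamental form, and normal derivative of the lapse (suitably scaled) to match across $\glue$, so the glued metric is $C^{1,1}$ with \emph{no} mean-curvature jump---regardless of the value of $c_i$. The result is a manifold with horizon inner boundary, to which the standard Bunting--Masood-ul-Alam doubling plus conformal compactification applies verbatim. Thus the paper needs only the positive mass theorem at the $C^{1,1}$/Sobolev level of \cite{Bartnik}, not a corners version or a quasi-local mass inequality, and never has to control the individual lapse values $c_i$ in advance. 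Your route could perhaps be repaired (e.g.\ by first capping the low-$c_i$ components in $\Sslice_-$ and then applying Shi--Tam to the rest, or by using a Schwarzschild cap instead of a flat ball in $g_+$), but as written it does not close.
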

\addtocounter{theorem}{-1}}

\begin{remark} 
As in \cite{CederPhoto}, one does not need to assume a priori that the mass is positive;  this is a consquence of the theorem.  In particular, the existence of photon spheres in static spacetimes of non-positive mass is ruled out.
\end{remark}

\begin{remark}\label{rem:multiple}
In addition to the photon sphere inner boundary, our arguments would allow for the presence of a (Killing)  horizon  as additional components of the boundary of the spacetime.  These would be treated just as in the original argument by Bunting and Masood-ul-Alam \cite{BMuA}. For simplicity, we have assumed no horizon boundaries.
\end{remark}

In Section \ref{sec:nbody}, we explain how Theorem \ref{thm:main} can be applied to the so called static $n$-body problem in General Relativity, yielding the following corollary:

{\renewcommand{\thetheorem}{\ref{thm:nbody}}
\begin{theorem}[No static configuration of $n$ ``very compact'' bodies and black holes]
There are no static equilibrium configurations of $n>1$ bodies and black holes in which each body is surrounded by its own photon sphere.
\end{theorem}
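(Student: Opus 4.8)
The plan is to obtain Theorem~\ref{thm:nbody} as an immediate consequence of Theorem~\ref{thm:main}, together with Remark~\ref{rem:multiple}. Suppose, for contradiction, that a static equilibrium configuration of $n>1$ bodies and black holes exists in which each body is enclosed by its own photon sphere. The region $\spacet$ exterior to all of the bodies and to all of the black hole horizons is then a static, vacuum, asymptotically flat spacetime, i.e.\ a geometrostatic spacetime, and its inner boundary is the disjoint union of the photon spheres $\photo=\bigsqcup_i\photo_i$ (one component $\photo_i$ per body) together with the Killing horizons of the black holes.

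First I would invoke Remark~\ref{rem:multiple}: since the Bunting--Masood-ul-Alam-type argument behind Theorem~\ref{thm:main} accommodates horizon boundary components exactly as in \cite{BMuA}, Theorem~\ref{thm:main} still applies to $\spacet$ with these extra horizons present. It then forces $\spacetime$ to be isometric to the exterior region $\{r\ge 3m\}$ of the photon sphere in a single \schild spacetime. But the \schild exterior of the photon sphere has a \emph{connected} photon sphere and contains \emph{no} black hole horizon. Hence there can be at most one photon sphere component, so at most one body and no black hole at all, contradicting $n>1$. This is the whole argument once Theorem~\ref{thm:main} is in hand.

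The only genuine work is the bookkeeping that sets up the application: one must check that the physically described ``static $n$-body configuration'' really does yield a geometrostatic spacetime to which Theorem~\ref{thm:main} (as extended by Remark~\ref{rem:multiple}) applies. Concretely, I would verify that the exterior vacuum region is connected with a single asymptotically flat end, that the lapse is positive and smooth up to the boundary, and that each surrounding photon sphere satisfies the geometric definition of Section~\ref{sec:setup and definition} as an inner boundary component of $\spacet$ (non-degenerate, umbilic, constant lapse, etc.). I expect this modelling and bookkeeping step --- not any new geometric estimate --- to be the main, and essentially the only, obstacle, and it is precisely what Section~\ref{sec:nbody} is devoted to; note also that the positivity of the mass, and hence the a priori exclusion of exotic non-positive-mass configurations, comes for free from Theorem~\ref{thm:main} rather than needing a separate argument here.
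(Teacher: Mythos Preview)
Your proposal is correct and matches the paper's own proof essentially line for line: the paper excises the bodies, applies Theorem~\ref{thm:main} to the remaining vacuum exterior $\slice=\overline{M}^{3}\setminus\cup_i\Omega_i$, and invokes Remark~\ref{rem:multiple} to absorb any horizon components, exactly as you describe. The ``bookkeeping'' you anticipate is precisely the content of the definitional paragraph in Section~\ref{sec:nbody}, and no further geometric input is used.
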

\addtocounter{theorem}{-1}}

\section{Setup and definitions}\label{sec:setup and definition}
Let us first quickly review the definition of and some facts about asymptotically flat static vacuum spacetimes. These model exterior regions of static configurations of stars or black holes. See Bartnik \cite{Bartnik} for a more detailed account of asymptotically flat Riemannian manifolds and harmonic coordinates as well as for the definition of the weighted Sobolev spaces $W^{k,p}_{-\tau}(E)$ we will use in the following. More details and facts on asymptotically flat static vacuum spacetimes can be found in \cite{CDiss}.

\begin{definition}[Geometrostatic spacetimes and systems]\label{def:AFSVS}
A smooth, time-oriented Lorent\-zian manifold or \emph{spacetime} $\spacetime$ 
is called \emph{(standard) static} if there exists a smooth Riemannian manifold $(\slice,g)$ 
and a smooth \emph{lapse} function $N:\slice\to\R^{+}$ such that
\begin{align}\label{static}
\spacet&=\R\times \slice,\quad \mathfrak{g}=-N^{2}dt^{2}+g.
\end{align}
It is called \emph{vacuum} if it satisfies the Einstein vacuum equation
\begin{align}\label{EE}
\mathfrak{Ric}&=0,
\end{align}
where $\mathfrak{Ric}$ denotes the Ricci curvature tensor of $\spacetime$. We will sometimes call $\slice$ a \emph{(time-)slice} of $\spacet$, as it arises as $\slice=\lbrace t=0\rbrace$, where $t$ is considered as the time variable of the spacetime. It will be convenient for us to allow $M^3$ to be a manifold with boundary.  In this case it is to be understood that $\spacetime$ extends to a slightly larger standard static spacetime containing $\R \times \partial M$.

A static spacetime is called \emph{asymptotically flat} if the manifold $\slice$ is diffeomorphic to the union of a compact set and an open \emph{end}
  $E^3$ which is diffeomorphic to $\R^3\setminus \overline{B}$, where $B$ is the open unit ball in $\R^3$. Furthermore, we require that, in the end $E^3$, the lapse function $N$, the metric $g$, and the coordinate diffeomorphism $\Phi=(x^{i}):E^3\to\R^3\setminus \overline{B}$ combine such that
{\begin{align}\label{AF}
g_{ij}-\delta_{ij}&\in W^{k,q}_{-\tau}(E)\\\label{NAF}
N-1\;\;&\in W^{k+1,q}_{-\tau}(E)
\end{align}
for some $\tau>1/2$, $\tau\notin\Z$, $k\geq2$, $q>4$,} and that $\Phi_{\ast}g$ is uniformly positive definite and uniformly continuous on $\R^3\setminus \overline{B}$. Here, $\delta$ denotes the Euclidean metric on $\R^3$. For brevity, smooth\footnote{M\"uller zum Hagen \cite{MzH} showed that static spacetimes with $g_{ij}, N\in C^3$ are automatically real analytic with respect to wave-harmonic coordinates if they solve \eqref{EE}.} asymptotically flat  static vacuum spacetimes will also be referred to as \emph{geometrostatic spacetimes}, the associated triples $(\slice,g,N)$ will be called \emph{geometrostatic systems}. We will use the radial coordinate $r:=\sqrt{(x^1)^{2}+(x^2)^{2}+(x^3)^{2}}$ corresponding to the coordinates $(x^{i})$.
\end{definition}

Exploiting \eqref{static}, the Einstein vacuum equation \eqref{EE} reduces to\begin{align}\label{SMEvac1}
N\,{\Ric}&={\nabla}^2 N\\\label{SMEvac3}
\triangle N&=0
\end{align}
on $\slice$, where $\nabla^{2}$, $\Ric$, and $\Scal$ denote the covariant Hessian, Ricci, and scalar curvature of the metric $g$, respectively. Combining \eqref{SMEvac1} and \eqref{SMEvac2}, one finds
\begin{align}\label{SMEvac2}
\Scal&=0
\end{align}
on $\slice$, where $\triangle$ denotes the Laplacian with respect to $g$. The \emph{static metric equations} \eqref{SMEvac1}, \eqref{SMEvac3} are a system of degenerate elliptic quasi-linear second order PDEs in the variables $N$ and $g_{ij}$ (with respect to for example $g$-harmonic coordinates). 

\subsection{Notation and conventions}
Our sign convention is such that the second fundamental form $h$ of an isometrically embedded $2$-surface $(\surf,\sigma)\hookrightarrow (\slice,g)$ with respect to the outward unit normal vector field $\nu$ is chosen such that
\begin{align}
h(X,Y)&:=g(\my{g}{\nabla}_{X} \nu,Y)
\end{align}
for all $X,Y\in\Gamma(\surf)$. We will make use of the contracted Gau{\ss} equation
\begin{align}\label{Gauss}
\my{g}{\Scal}-2\,\my{g}{\Ric}(\nu,\nu)&= \my{\sigma}{\Scal}-(\my{\sigma}{\tr}\,h)^2+\lvert {h}\rvert^2_{\sigma},
\end{align}
where the left upper indices indicate the metric from which a certain covariant derivative or curvature tensor is constructed. The trace-free part of $h$ will be denoted by $\free{h}$. We will also use the well-known identity
\begin{align}\label{surflap}
\mylap{g}f=\mylap{\sigma\!}f+\my{g}{\nabla}^2\!f(\nu,\nu)+H\nu(f)
\end{align}
which holds for any smooth function $f:\slice\to\R$.

\subsection{Definition of photon surfaces and photon spheres}
We adopt the definition of photon spheres from \cite{CederPhoto}. First, we recall the definition of photon surfaces by Claudel, Virbhadra and Ellis \cite{CVE}, see also Perlick \cite{Perlick}.

\begin{definition}[Photon surface]\label{def:photo-surf}
A timelike embedded hypersurface $(\photo,p)\hookrightarrow\spacetime$ of a smooth spacetime $\spacetime$ is called a \emph{photon surface} if any null geodesic initially tangent to $\photo$ remains tangent to $\photo$ as long as it exists.
\end{definition}

A photon sphere is then defined as a photon surface on which photons have constant energy and frequency. By Lemma 2.7 of \cite{CederPhoto}, this is equivalent to $N$ being constant along each connected component of a photon surface:

\begin{definition}[Photon sphere]\label{def:photo}
Let $\spacetime$ be a geometrostatic spacetime and $(\photo,p) \hookrightarrow\spacetime$ a photon surface. Then $\photo$ is called a \emph{(generalized) photon sphere} if the lapse function $N$ of the spacetime is constant on each connected component of~$\photo$.
\end{definition}

\begin{remark}
Note that the definition of \emph{(generalized) photon sphere} a priori neither requires the photon sphere to be connected nor to have the topology of a cylinder over a sphere. This allows us to treat multiple photon spheres at once. In fact, as a consequence of Proposition \ref{prop} and Lemma \ref{untrapped} below, we will see that 
 the topology of each component of a (generalized) photon sphere  is that of a cylinder over a sphere.
\end{remark}

\subsection{Properties of photon spheres}\label{subsec:props}
Let $(\photo,p)$ be a photon sphere arising as the inner boundary of a geometrostatic spacetime $\spacetime$ as in Theorem \ref{thm:main} and let $N_i:=N\vert_{\photo_{i}}$ denote the (constant) value of $N$ on the connected component $(\photo_i,p_i)$ of $(\photo,p)$ for all $i=1,\dots,I$. As the photon sphere $(\photo,p)$ arises as the inner boundary of a standard static spacetime according to Definition \ref{def:AFSVS}, each component $(\photo_{i},p_{i})$ must be a warped cylinder $(\photo_i,p_i)=(\R\times\surf_i,-N_i^2dt^2+\sigma_i)$,
where $\surf_i$ is the (necessarily compact) intersection of the photon sphere component $\photo_i$ and the time slice $\slice$ and $\sigma_i$ is the (time-independent) induced metric on $\surf_i$. Set $\surf:=\cup_{i=1}^I\surf_i$ and let $\sigma$ be the metric on $\surf$ that coincides with $\sigma_i$ on $\surf_i$. 

Then, photon spheres have the following local\footnote{which are derived without appealing to the asymptotic decay at infinity.} properties:
\begin{Prop}[Cederbaum  \cite{CederPhoto}]\label{prop}
Let $\spacetime$ be a geometrostatic spacetime and let $(\photo,p)\hookrightarrow\spacetime$ be a (generalized) photon sphere arising as the inner boundary of $\spacet$. Let $\mathfrak{H}:\photo\to\R$ denote the mean curvature of $(\photo,p)\hookrightarrow\spacetime$ and write 
\begin{align}
\left(\photo,p\right)=\left(\R\times\surf,-N^{2}dt^2+\sigma\right)= \bigcup_{i=1}^I\left(\R\times\surf_i,-N_i^2dt^2+\sigma_i\right),
\end{align}
where each $\photo_{i}=\R\times\surf_{i}$ is a connected component of $\photo$. Then the mean curvature $\mathfrak{H}_{i} := \mathfrak{H}\vert_{\photo_{i}}$
is constant on each connected component $\photo_{i}=\R\times\surf_i$ and the embedding $(\surf,\sigma)\hookrightarrow(\slice,g)$ is totally umbilic with constant mean curvature $H_i = \frac{2}{3}\mathfrak{H}_i$ on the component $\surf_i$. The scalar curvature of the component $(\surf_i,\sigma_i)$, $\my{\sigma_i}{\Scal}$, is a non-negative constant, namely 
\begin{align}\label{eq:scal}
\my{\sigma}{\Scal}_i=\frac{3}{2}H_{i}^{2}.
\end{align}
Moreover, the normal derivative of the lapse function $N$ in direction of the outward unit normal $\nu$ to $\surf$, $\nu(N)$, is also constant on every component $(\surf_i,\sigma_i)$, $\nu(N)_i := \nu(N)\vert_{\surf_{i}}$. For each $i\in\lbrace1,\dots,I\rbrace$, either $H_{i}=0$ and $\surf_{i}$ is a totally geodesic flat torus or $\surf_{i}$ is an intrinsically and extrinsically round CMC sphere for which the above constants are related via
\begin{align}\label{eq:prop}
N_iH_i&=2\nu(N)_i,\\\label{eq:prop2}
\left(r_iH_i\right)^2&=\frac{4}{3},
\end{align}
where
\begin{align}
r_i&:=\sqrt{\frac{\lvert\surf_i\rvert_{\sigma_i}}{4\pi}}
\end{align}
denotes the \emph{area radius} of $\surf_{i}$.
\end{Prop}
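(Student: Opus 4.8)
The plan is to treat one connected component $\photo_i=\R\times\surf_i$ at a time and to transfer the defining umbilicity of the photon surface, together with the spacetime vacuum equation, down to the spatial cross section $\surf_i\hookrightarrow\slice$. The starting point is the characterization of Claudel, Virbhadra and Ellis \cite{CVE} (see also \cite{Perlick}): a timelike hypersurface is a photon surface if and only if it is totally umbilic, so that the second fundamental form $\mathfrak{h}$ and mean curvature $\mathfrak{H}$ of $(\photo_i,p_i)\hookrightarrow\spacetime$ satisfy $\mathfrak{h}=\tfrac13\mathfrak{H}\,p_i$. First I would compute $\mathfrak{h}$ from the warped product structure \eqref{static}. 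Since $N\equiv N_i$ is constant on $\photo_i$, the unit normal to $\photo_i$ is exactly the outward unit normal $\nu$ of $\surf_i$ in $(\slice,g)$, and a short Christoffel computation for $\mathfrak{g}=-N^2dt^2+g$ gives that $\mathfrak{h}$ is block diagonal in the frame $\{e_0:=N^{-1}\partial_t\}\cup T\surf_i$, with spatial block equal to the second fundamental form $h$ of $\surf_i\hookrightarrow\slice$ and with $\mathfrak{h}(e_0,e_0)=-\nu(N)/N$. Reading off umbilicity block by block then yields at once that $\surf_i$ is totally umbilic ($\free{h}=0$) with $h=\tfrac12 H\sigma$, that $H_i=\tfrac23\mathfrak{H}_i$, and---from the timelike block together with $\mathfrak{H}=\tr_{p}\mathfrak{h}$---the relation \eqref{eq:prop}, $N_iH_i=2\nu(N)_i$.

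The key step, and the one I would highlight, is the constancy of $\mathfrak{H}_i$. Here I would apply the contracted Codazzi equation to $(\photo_i,p_i)\hookrightarrow\spacetime$: for any $Y$ tangent to $\photo_i$ it reads, up to the overall curvature sign, $\mathfrak{Ric}(\nu,Y)=\nabla_Y\mathfrak{H}-(\diver_p\mathfrak{h})(Y)$. For an umbilic hypersurface of a $4$-manifold $(\diver_p\mathfrak{h})(Y)=\tfrac13\nabla_Y\mathfrak{H}$, and the vacuum equation \eqref{EE} makes the left-hand side $\mathfrak{Ric}(\nu,Y)$ vanish, so $0=\tfrac23\nabla_Y\mathfrak{H}$ and $\mathfrak{H}_i$ is constant on each component. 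Consequently $H_i=\tfrac23\mathfrak{H}_i$ and, via \eqref{eq:prop}, $\nu(N)_i$ are constant as well. It is worth stressing that it is the \emph{spacetime} vacuum equation, rather than any spatial identity, that forces this constancy in one stroke.

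It then remains to compute the intrinsic curvature of $\surf_i$ and to identify its topology. For the scalar curvature I would combine the spatial static equations with the umbilicity already obtained. Applying \eqref{surflap} to $N$ on $\surf_i$, where $N\equiv N_i$ is constant so that $\mylap{\sigma}N=0$, and using \eqref{SMEvac3}, gives $\my{g}{\nabla}^2N(\nu,\nu)=-H\nu(N)$; feeding this and \eqref{eq:prop} into \eqref{SMEvac1} yields $\Ric(\nu,\nu)=-\tfrac12 H^2$. Substituting $\Scal=0$ (from \eqref{SMEvac2}), $\Ric(\nu,\nu)=-\tfrac12H^2$, $\tr_\sigma h=H$ and $\lvert h\rvert^2_\sigma=\tfrac12H^2$ into the contracted Gauss equation \eqref{Gauss} produces exactly \eqref{eq:scal}, $\my{\sigma}{\Scal}_i=\tfrac32 H_i^2$, which is a non-negative constant since $H_i$ is.

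Finally, I would read off the topology from the fact that $\surf_i$ is a closed surface of constant Gauss curvature $K=\tfrac34H_i^2\ge 0$. If $H_i=0$ then $K=0$ and $h=\tfrac12H_i\sigma=0$, so $\surf_i$ is a closed flat totally geodesic surface, hence a flat torus. If $H_i\neq 0$ then $K>0$ is a positive constant, so $\surf_i$ is an intrinsically round sphere; it is also extrinsically round, being umbilic with constant $H_i$. For such a sphere the area radius $r_i=\sqrt{\lvert\surf_i\rvert_{\sigma_i}/4\pi}$ coincides with the curvature radius $K^{-1/2}$, and $K=1/r_i^2=\tfrac34H_i^2$ rearranges to \eqref{eq:prop2}, $(r_iH_i)^2=\tfrac43$. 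The main obstacle I anticipate is purely a matter of careful sign bookkeeping in the warped product computation of $\mathfrak{h}$ and in matching the curvature conventions of \eqref{Gauss} and \eqref{surflap}; once the signs are pinned down (e.g.\ by checking against the \schild values at $r=3m$), every relation in the Proposition follows from the three inputs: umbilicity of the photon surface, the spacetime vacuum equation via Codazzi, and the spatial static equations via Gauss.
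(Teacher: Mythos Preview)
The paper does not supply its own proof of this proposition; it is quoted as a result from \cite{CederPhoto}. Your argument is correct and is the standard derivation one would expect there: total umbilicity of the photon surface via the Claudel--Virbhadra--Ellis characterization, constancy of $\mathfrak{H}_i$ from the spacetime Codazzi equation and $\mathfrak{Ric}=0$, the relation $N_iH_i=2\nu(N)_i$ from the timelike block of the umbilicity condition in the warped product, the scalar curvature identity \eqref{eq:scal} from the contracted Gauss equation \eqref{Gauss} together with the static equations \eqref{SMEvac1}--\eqref{SMEvac3} via \eqref{surflap}, and the sphere/torus dichotomy with \eqref{eq:prop2} from Gauss--Bonnet.
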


\newpage
\begin{lemma}\label{untrapped} 
Let the setting and notation be as in Proposition \ref{prop}.  Then  the mean curvature of $P^3$ is positive,
$\mathfrak{H} > 0$.  Equivalently, each component $\Sigma_i^2$ of $\Sigma^2$ has (constant) positive mean curvature, $H_i >0$.
\end{lemma}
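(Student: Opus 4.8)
The plan is to reduce the claim, via Proposition~\ref{prop}, to showing that no component $\Sigma_i$ is a flat torus and that $\nu(N)_i>0$ on every component $\Sigma_i$. On a round--sphere component the reduction is immediate: \eqref{eq:prop} reads $N_iH_i=2\nu(N)_i$ with $N_i>0$, so $H_i$ and $\nu(N)_i$ have the same sign, and $H_i\neq 0$ already by \eqref{eq:prop2}; since moreover $H_i=\tfrac23\mathfrak H_i$ on $\Sigma_i$ by Proposition~\ref{prop}, positivity of every $H_i$ is exactly $\mathfrak H>0$. Throughout I will use that $N$ is harmonic on $(M^3,g)$ by \eqref{SMEvac3}, that $N>0$ with $N\to1$ at the asymptotically flat end, and that geometrostatic systems are real--analytic (so that $N$ satisfies unique continuation).

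First I would control the component on which $N$ is extremal. The lapse $N$ is nonconstant, since a constant lapse forces $\Ric\equiv 0$ by \eqref{SMEvac1} and hence flatness of $(M^3,g)$, which is incompatible with carrying a compact totally umbilic inner boundary of the type in Proposition~\ref{prop} (in flat $\R^3$ such a boundary would be a round sphere with $\nu(N)_i=0$, forcing $N_i=0$); one checks moreover, as in the static uniqueness theory, that $N<1$, so $N$ is strictly larger near the end than anywhere on $\partial M$. By the strong maximum principle $N$ attains its minimum over $\overline{M^3}$ at a boundary component $\Sigma_{i_0}$ with $N_{i_0}=\min_iN_i$, and the Hopf boundary point lemma shows that the derivative of $N$ in the direction pointing out of $M^3$ along $\Sigma_{i_0}$ is strictly negative, i.e.\ $\nu(N)_{i_0}>0$. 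Hence $\Sigma_{i_0}$ is a round sphere (a flat torus would have $H_{i_0}=0$, forcing $\nu(N)_{i_0}=0$ by \eqref{eq:prop}) with $H_{i_0}>0$; the symmetric argument applied to $\max_iN_i$ handles the largest component in the same way.

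The main obstacle is to promote this to \emph{every} component and to exclude the flat torus case outright, since the maximum and minimum principles only pin down the extremal levels of $N$. My plan here has two parts. (i) Unique continuation discards the degenerate situations: if $\nu(N)_i=0$ on some component, then $N$ and $\nabla N$ coincide with the constant $N_i$ along $\Sigma_i$, so $N\equiv N_i$ on $M^3$, contradicting $N\to1$; and on a totally geodesic flat torus component the Gau{\ss} and Riccati equations together with analyticity force $N$ to be an affine function of the $g$--distance to $\Sigma_i$, which cannot hold for a bounded $N$. Thus $\nu(N)_i\neq 0$ and every component is a round sphere. (ii) For the sign, suppose $\nu(N)_{i_0}<0$ on some component; since $\nu(N)_{i_0}$ is then a negative constant, all of $\Sigma_{i_0}$ lies in the closure of a bounded connected component $U$ of the sublevel set $\{N<N_{i_0}\}$ (bounded because $N\to1>N_{i_0}$ near the end), and since $N\le N_{i_0}$ on $\partial U$ while $N<N_{i_0}$ inside $U$, the minimum principle on $U$ forces $\partial U$ to contain a further component $\Sigma_j$ with $N_j<N_{i_0}$, necessarily with $\nu(N)_j>0$. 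I would then seek a contradiction by tracking, for $c$ slightly below $N_{i_0}$, how the level set $\{N=c\}$ must separate the inward--pointing sphere $\Sigma_{i_0}$ from the outward--pointing sphere $\Sigma_j$ although both remain in $\overline U$. Closing this step --- ruling out a photon--sphere component with $\nu(N)<0$ nested behind one with $\nu(N)>0$ --- is, I expect, the delicate point; it likely requires the level--set analysis to be combined with the analyticity of $N$ and with the strict mean--convexity of the large coordinate spheres $\{r=R\}$ in the end (which would otherwise have to shield the ``bad'' component by a minimal--surface barrier), rather than following from the maximum principle alone.
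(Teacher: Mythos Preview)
Your approach via the maximum principle for the harmonic lapse $N$ is genuinely different from the paper's, but it has two real gaps.

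First, the step ``a flat torus would have $H_{i_0}=0$, forcing $\nu(N)_{i_0}=0$ by \eqref{eq:prop}'' is circular: relation \eqref{eq:prop} is stated in Proposition~\ref{prop} only for the round--sphere components, not for the torus case. Tracing through the derivation (via \eqref{surflap}, \eqref{SMEvac1}, \eqref{Gauss}, and \eqref{eq:scal}) one only obtains $H_i\bigl(\nu(N)_i-\tfrac12 N_iH_i\bigr)=0$, which says nothing about $\nu(N)_i$ when $H_i=0$. Your alternative claim that on a totally geodesic flat torus $N$ must be affine in the normal distance is also not justified: the normal evolution of the metric off $\Sigma_i$ is governed by the ambient curvature, which is not known to vanish, so the product structure does not persist.

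Second, and more seriously, your step (ii) is, as you yourself note, incomplete. The configuration you are trying to exclude --- a component $\Sigma_{i_0}$ with $\nu(N)_{i_0}<0$ sitting ``behind'' another component $\Sigma_j$ with $\nu(N)_j>0$ inside a bounded sublevel region --- is not ruled out by the maximum principle or by level--set topology alone; the harmonic function $N$ can perfectly well have such saddle behaviour. The paper closes exactly this gap by a different mechanism: it passes to the Fermat metric $\hat g=N^{-2}g$, in which every photon sphere component becomes \emph{totally geodesic} (since null geodesics of the spacetime project to $\hat g$--geodesics). This makes the other components act as genuine barriers, so a $\hat g$--minimizing geodesic from any $\Sigma_i$ to a large mean--convex coordinate sphere $S_R$ exists and stays in the interior. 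Along this geodesic one then invokes the monotonicity of $t\mapsto H(t)/N(\gamma(t))$ (from \cite{GalMiao}) for the $g$--mean curvature of the equidistant surfaces; starting from $H_i\le 0$ this forces $H(\ell)\le 0$, contradicting mean convexity of $S_R$. That comparison--geometry input is precisely what replaces the missing piece in your argument.
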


\proof  {The proof  uses an argument similar to one used in the proof of Theorem~3.1 in \cite{GalMiao}, to which we refer the reader for details. The proof here makes use of a specific characterization of photon spheres.}  

Suppose some component $\glue$ of $\surf$ has mean curvature $H_i \le 0$.     Let $S_R = \{r = R\}$ be a large `radial sphere' out on the asymptotically flat end.  Choose $R$ sufficiently large so that $S_R$ is mean convex, i.e. has positive mean curvature with respect to the outward normal.   

Now consider $M^3$ in the so-called Fermat metric $\hat g = N^{-2} g$.  The fact that $P^3$ is a photon sphere implies that $\surf$ is totally geodesic in $(M^3, \hat g)$.  (The reason for this is that null geodesics in a standard static spacetime project to geodesics in the canonical slice with respect to the Fermat metric; see e.g.\ ~\cite[Chapter 8]{Frankel} for details.)  By compactness, there exist points $p \in \glue$ and $q \in S_R$ such that $d_{\hat g}(p, q) = d_{\hat g}(\glue,S_R)$.  Let $\gamma:[0,\ell] \to M^3$ be a unit speed length minimizing geodesic from $p$ to $q$ in $(M^3, \hat g)$.  The existence of such a geodesic relies on the fact that $\surf$ is totally geodesic in $(M^3, \hat g)$:  Essentially, the other components of $\surf$  serve as `barriers' for the construction of such a minimizer.  Since $\gamma$ minimizes the $\hat g$-distance between $\glue$ and $S_R$, it meets these surfaces orthogonally.  Moreover,  there will be no $\hat g$-cut points to $\glue$ along $\gamma$, except possibly at the end point $q = \gamma(\ell)$.  Suppose 
also that $q$ is not a cut point (this is a technicality that can be easily handled).  Then for each $t \in [0, \ell]$, the set $W_t$ a $\hat g$-distance $t$ from $\glue$  will be a smooth surface near the point $\gamma(t) \in W_t$.  
By restricting to a neighborhood of $\gamma(t)$ we may assume each $W_t$, $t\in [0,\ell]$, is smooth.
 
From the minimizing property of $\gamma$, we see that $W_{\ell}$ lies to the inside of $S_R$, but touches $S_R$ at $q$.   For each $t \in [0,\ell]$, let $H(t)$ be the mean curvature of $W_{t}$  at $\gamma(t)$ in the {\it original metric} $g$.  By the maximum principle and mean convexity of $S_R$, $H(\ell) > 0$.  On the other hand, by the monotonicity formula \cite[Equation 3.24]{GalMiao}, the function  $t \to H(t)/N(\gamma(t))$ must be nonincreasing.  Since we are assuming that $H_i \le 0$,   this would imply that $H(\ell)  \le 0$, a contradiction.~\qed 

\begin{remark}
Lemma \ref{untrapped}  rules out the torus case in Proposition \ref{prop} and thus ensures that each photon sphere component is diffeomorphic to a cylinder over a sphere. Moreover, it ensures that not only $H_{i}>0$ but also $\nu(N)_{i}>0$ on each component $\surf_{i}$.   
\end{remark}

\begin{remark}
 In our definition of a standard static spacetime, we have assumed just one asymptotically flat end.  If, however, we had allowed several asymptotically flat ends in the definition, the argument used to prove Lemma \ref{untrapped} could  be used to prove in the context of Theorem~\ref{thm:main} that there can be only one end.
\end{remark}

\newpage
\section{Proof of the main theorem}\label{sec:proof}
This section is dedicated to the proof of the `static photon sphere uniqueness theorem':
\begin{theorem}\label{thm:main}
Let $\spacetime$ be a geometrostatic spacetime that possesses a (possibly disconnected) photon sphere $(\photo,p)\hookrightarrow\spacetime$, arising as the inner boundary of~$\spacet$. Let $m$ denote the ADM-mass of $\spacetime$ and let $\mathfrak{H}:\photo\to\R$ denote the mean curvature of $(\photo,p)\hookrightarrow\spacetime$.  Then $m=(\sqrt3\,\mathfrak{H})^{-1}$, with $\mathfrak{H}>0$, and $\spacetime$ is isometric to the region $\lbrace r\geq3m\rbrace$ exterior to the photon sphere $\lbrace r=3m\rbrace$ in the \schild spacetime of mass~$m$. In particular, $(\photo,p)$ is connected and a cylinder over a topological sphere.\end{theorem}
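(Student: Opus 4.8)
The plan is to run the Bunting--Masood-ul-Alam argument \cite{BMuA}, with the photon sphere $\photo$ playing the role of the event horizon there, and to close the argument with the rigidity case of the Riemannian positive mass theorem in the regularity of \cite{Bartnik} (cf.\ \cite{LeeLefloch}). By Lemma \ref{untrapped} and Proposition \ref{prop} I may assume from the start that each component $\surf_i$ of $\surf$ is an intrinsically and extrinsically round CMC sphere of area radius $r_i$ with $H_i>0$, $\nu(N)_i>0$, $(r_iH_i)^2=\tfrac43$, $\nu(N)_i=\tfrac12 N_iH_i$, and $\my{\sigma_i}{\Scal}=\tfrac32 H_i^2$. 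Since $N$ is harmonic on $\slice$, equals the constant $N_i$ on each $\surf_i$, and tends to $1$ at infinity, the maximum principle together with $\nu(N)_i>0$ forces $0<N_i<1$, so that $\phi_\pm:=\tfrac12(1\pm N)$ are positive functions on $\slice$.

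The first concrete move is the conformal rescaling: since $\triangle N=0$ and $\Scal=0$, the metrics $g_\pm:=\phi_\pm^4 g$ are scalar-flat, and, using the expansion $N=1-\tfrac{m}{r}+O(r^{-2})$ valid at a static vacuum end (see \cite{CDiss}), one checks exactly as in \cite{BMuA} that $g_+$ is asymptotically flat with \emph{vanishing} ADM mass while $g_-$ decays like $r^{-4}$, so that $(\slice,g_-)$ admits a smooth one-point compactification --- the second sheet glued in by Bunting and Masood-ul-Alam. Each $\surf_i$ stays round and totally umbilic for $g_\pm$.

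Next I would build a complete asymptotically flat manifold $(\widehat M,\widehat g)$ with a single end by capping off $(\slice,g_+)$ along $\surf$. In \cite{BMuA} the cap is that second sheet, glued in via $\tfrac12(1-N)$; here, however, $\tfrac12(1-N_i)\ne\tfrac12(1+N_i)$ because $N_i\ne0$, so the two sheets no longer agree isometrically along $\surf$. Instead I would glue to each $\surf_i$ a suitably scaled round Euclidean ball whose boundary is isometric to $(\surf_i,g_+|_{\surf_i})$. Then $\widehat M$ is connected with one asymptotically flat end, $\widehat g$ is scalar-flat away from the corner $\surf$, and $\widehat g$ retains the vanishing ADM mass of $g_+$. \emph{The hard part will be to verify that the distributional scalar curvature of $\widehat g$ along $\surf$ is non-negative}, i.e.\ that the mean curvature of $\surf_i$ induced from $(\slice,g_+)$ does not exceed the mean curvature induced from the cap. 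Writing $H_i^{g_+}$ for the $g_+$-mean curvature of $\surf_i$ and using $H_i^{g_+}=\phi_+^{-2}H_i+4\phi_+^{-3}\nu(\phi_+)_i$ with $\nu(\phi_+)_i=\tfrac12\nu(N)_i$ together with $r_iH_i=\tfrac{2}{\sqrt3}$ and $\nu(N)_i=\tfrac12 N_iH_i$ from Proposition \ref{prop}, this condition collapses to $H_i^{g_+}\le 2/\rho_i^+$, where $\rho_i^+=\phi_+^2 r_i$ is the area radius of $(\surf_i,g_+|_{\surf_i})$ and $2/\rho_i^+$ is the mean curvature of a Euclidean sphere of that radius --- equivalently, it collapses to the a priori bound $N_i\le\tfrac1{\sqrt3}$, with equality exactly at the \schild value. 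Establishing this bound --- where the precise photon-sphere identities of Proposition \ref{prop} are indispensable, and which I expect to require an argument in the spirit of the monotonicity used in the proof of Lemma \ref{untrapped} --- is the technical heart of the proof.

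Granting it, $(\widehat M,\widehat g)$ is a complete asymptotically flat manifold with non-negative (distributional) scalar curvature and zero ADM mass, so the rigidity case of the positive mass theorem forces $(\widehat M,\widehat g)\cong(\R^3,\delta)$. In particular the corner terms must vanish, so $N_i=\tfrac1{\sqrt3}$ and each cap is a genuine round ball; pulling the flat structure back through $\phi_+$, the function $u:=\phi_+^{-1}=\tfrac{2}{1+N}$ is a positive $\delta$-harmonic function on $\R^3$ minus those balls with $u\to1$ at infinity, and $g=u^4\delta$. From here the analysis is exactly as in \cite{BMuA}: the full static vacuum equations force $u$ to be single-centered, so that $(\slice,g,N)$ is the region $\{r\ge3m\}$ of the spatial \schild metric carrying the \schild lapse; in particular $\photo$ is connected and a cylinder over a sphere. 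Finally $H_i=\tfrac2{\sqrt3 r_i}$, $\mathfrak H_i=\tfrac32 H_i$, and $r_i=3m$ give $\mathfrak H>0$ and $m=(\sqrt3\,\mathfrak H)^{-1}$, which completes the proof.
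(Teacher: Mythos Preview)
Your overall architecture (conformal change by $\tfrac12(1\pm N)$, one-point compactification, positive mass rigidity, then the Bunting--Masood-ul-Alam conformal-flatness endgame) is sound, but the proposal has a genuine gap at exactly the point you flag: the a priori bound $N_i\le 1/\sqrt3$. You reduce the corner condition for your Euclidean-ball caps to this inequality and then say it ``is the technical heart of the proof'' and that you ``expect'' a monotonicity argument to supply it; but no such argument is given, and the monotonicity of $H/N$ used in Lemma~\ref{untrapped} does not obviously yield a bound of this strength (it only gives $H_i>0$). Without $N_i\le 1/\sqrt3$ the distributional scalar curvature at your corners may be negative and the positive mass theorem does not apply. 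As written, the proof is incomplete.

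The paper's proof is organized precisely so as to \emph{avoid} needing any such a priori bound. Instead of capping $(\slice,g_+)$ with Euclidean balls, the paper works first at the level of the unrescaled metric and glues to each $\surf_i$ a Schwarzschild ``neck'' of mass $\mu_i=r_i/3$, namely the region between the horizon and the photon sphere. The point is that the photon-sphere identities of Proposition~\ref{prop} ($r_iH_i=2/\sqrt3$, umbilicity, $\nu(N)_i=\tfrac12 N_iH_i$, and $\triangle N=0$) match \emph{exactly} the corresponding quantities on the photon sphere of Schwarzschild of that mass, so the glued metric is $C^{1,1}$ across each $\surf_i$ with no inequality needed. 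The resulting manifold now has genuine horizon boundary (where the extended lapse $\psi$ vanishes), one doubles across it, and only the elementary bound $N_i<1$ (maximum principle plus $\nu(N)_i>0$) is required to make $u=\tfrac12(1+\psi)$ a positive conformal factor. From there the argument finishes as in \cite{BMuA}. In short: the missing step in your approach is replaced in the paper by a different choice of fill-in (Schwarzschild necks rather than flat balls) that turns the delicate mean-curvature inequality into an automatic $C^{1,1}$ match.
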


\begin{proof}[Proof of Theorem \ref{thm:main}]
We will work directly in the canonical slice $(\slice,g)$ throughout the proof. The main idea of our proof is as follows: In Step 1, we will define a new static asymptotically flat Riemannian manifold $(\widetilde{M}^3,\widetilde{g}\,)$ with (Killing) horizon boundary by gluing in some carefully chosen pieces of (spatial) \schild manifolds of appropriately chosen masses. More precisely, at each photon sphere base $\surf_{i}$, we will glue in a ``neck'' piece of a \schild manifold of mass $\mu_{i}>0$, namely the cylindrical piece between its photon sphere and its horizon. This creates a new horizon boundary corresponding to each $\surf_{i}$. The manifold $\widetilde{M}^{3}$ itself is smooth while the metric $\widetilde{g}$ is smooth away from the gluing surfaces, and, as will be shown, $C^{1,1}$ across them. Also, away from the gluing surfaces, $(\widetilde{M}^3,\widetilde{g}\,)$ is scalar~flat.

Then, in a very short Step 2, we double the glued manifold over its minimal boundary\footnote{In case the original manifold had additional Killing horizon boundary components, $M$ has additional boundary components with $N=0$ and thus $H=0$, $\free{h}=0$, $\my{\sigma\!}{R}\equiv\text{const.}$ The minimal boundary $\mathfrak{B}$ constructed here is thus of the same geometry and regularity as the spatial slices of the event horizon and the doubling can be carried out for both at once.} and assert that the resulting manifold---which will also be called $(\widetilde{M}^3,\widetilde{g}\,)$---is in fact smooth across $\mathfrak{B}$. The resulting manifold has two isometric ends and is geodesically complete.

In Step 3, along the lines of \cite{BMuA}, we will conformally modify $(\widetilde{M}^{3},\widetilde{g}\,)$ into another geodesically complete, asymptotically flat Riemannian manifold $(\widehat{M}^{3}=\widetilde{M}^{3}\cup\lbrace \infty\rbrace,\widehat{g}\,)$.  By our choice of conformal factor, $(\widehat{M}^{3},\widehat{g}\,)$ is smooth and scalar flat away from the gluing surfaces and the point $\infty$, and suitably regular across them. The new manifold $(\widehat{M}^{3},\widehat{g}\,)$ will have precisely one end of vanishing ADM-mass.

In Step 4, applying the rigidity statement of the positive mass theorem with suitably low regularity, we find that $(\widehat{M}^{3},\widehat{g})$ must be isometric to Euclidean space $(\R^{3},\delta)$. Thus, the original geometrostatic manifold $(\slice,g,N)$ was conformally flat and it follows as in \cite{BMuA} that it is indeed isometric to the exterior region $\lbrace r\geq3m\rbrace$ of the photon sphere in the (spatial) \schild manifold of mass $m=\frac{1}{\sqrt3\,\mathfrak{H}}>0$. This will complete our proof.

\subsection*{Step 1: Constructing a scalar flat, asymptotically flat manifold with minimal boundary}
First, we recall that every connected component $\surf_{i}$ must be a topological sphere by Proposition \ref{prop} and Lemma \ref{untrapped}.
For each $i\in\lbrace{1,\dots,I\rbrace}$, we define the \emph{mass} $m_i$ of $\surf_i$ by
\begin{align}\label{def:mi}
m_i&:=\frac{1}{4\pi}\int_{\surf_i}\nu(N)\,dA=\frac{\lvert\surf_i\rvert_{\sigma_i}}{4\pi}\nu(N)_{i}=r_{i}^{2}\,\nu(N)_{i},
\end{align}
where $dA$ denotes the area measure with respect to $\sigma$. 
This is motivated by \eqref{SMEvac3} and the relationship of ADM-mass and the asymptotic decay of the geometrostatic system $(\slice,g,N)$, see Section 4.2 in \cite{CDiss}. We find $m_{i}>0$ for all $i\in\lbrace1,\dots,I\rbrace$ by \eqref{eq:prop} and Lemma \ref{untrapped}.  Moreover, for each $i\in\lbrace{1,\dots,I\rbrace}$, we set 
\begin{align}\label{def:mui}
 \mu_i&:=\frac{r_i}{3},\quad I_i:=\left[2\mu_i,r_i=3\mu_{i}\right]\subset\R.
\end{align}

To each boundary component $\surf_i$ of $\slice$, we now  glue in a cylinder of the form $I_i\times\surf_i$. We do this such that the original photon sphere component $\surf_i\subset\slice$ corresponds to the level $\lbrace{r_i\rbrace}\times\surf_i$ of the cylinder $I_i\times\surf_i$ and will  continue to call this gluing surface $\surf_i$.
The resulting manifold $\widetilde{M}^{3}$ has inner boundary
\begin{align}\label{boundary}
\mathfrak{B}&:=\bigcup_{i=1}^{I}\,\lbrace2\mu_{i}\rbrace\times\surf_{i}.
\end{align}
{(The coordinate charts $(y^1,y^2, \psi)$ on $\widetilde{M}^{3}$ covering each $\surf_i$ are described below.)}

In the following, we construct an asymptotically flat, scalar flat Riemannian metric $\widetilde{g}$ on $\widetilde{M}^3$ which is smooth away from the gluing surfaces $\glue$, $i=1,\dots,I$, and smooth and geodesically complete up to the boundary $\mathfrak{B}$. On $\slice$, we keep the original metric $g$, so that $\widetilde{g}:=g$, there, while on $I_i\times\surf_i$, we set
\begin{align}\label{def:g}
 \widetilde{g}\vert_{I_i\times\surf_i}&:=\frac{1}{\varphi_i(r)^{2}}dr^2+\frac{r^{2}}{r_i^{2}}\sigma_i=\frac{1}{\varphi_i(r)^{2}}dr^2+r^{2}\,\Omega,\\\label{def:phi}
 \varphi_i(r)&:=\sqrt{1-\frac{2\mu_i}{r}},
\end{align}
where $r\in I_i$ denotes the coordinate along the cylinder $I_i\times\surf_i$ and we have used that $\sigma_i=r_{i}^{2}\,\Omega$  by Proposition \ref{prop}, with $\Omega$ the canonical metric on $\mathbb{S}^{2}$. In other words, for each $i\in\lbrace1,\dots,I\rbrace$, we have glued in the portion of spatial Schwarzschild of mass $\mu_i>0$ from the minimal surface to the photon sphere.  In a moment we will show that $\widetilde g$ is well defined and $C^{1,1}$ across $\surf$.

The metric $\widetilde{g}$ is naturally smooth away from the gluing surfaces $\glue$. The manifold $(\widetilde{M}^{3},\widetilde{g}\,)$ is geodesically complete up to the minimal boundary $\mathfrak{B}$ as $(\slice,g)$ was assumed to be geodesically complete up to the boundary in Definition \ref{def:AFSVS}. Moreover, it is scalar flat away from the gluing surfaces as both $(\slice,g)$ and (spatial) \schild are scalar flat.\\ 

\noindent
\emph{It remains to show that $(\widetilde{M}^{3},\widetilde{g}\,)$ is $C^{1,1}$ across all gluing surfaces.}  To show this, we introduce the function
\begin{align}\label{def:v}
\psi:\widetilde{M}^3\to\R: p\mapsto
   \begin{cases}
     N(p) & \text{if } p\in \slice,\\[0.25cm]
     \frac{3 m_i}{r_i}\,\varphi_i(r(p)) & \text{if } p\in I_i\times\surf_i
         \end{cases}
\end{align}
which we intend to use as a smooth collar function across the gluing surfaces $\glue$. {Let us now fix $i\in\lbrace{1,\dots,I\rbrace}$.}

{Let us first show that $\psi$ is indeed well-defined and can be used as a smooth coordinate in a neighborhood of the gluing surface $\glue$: First, by construction, $\psi$ is smooth away from $\glue$. Then, by our choice of constant factor\footnote{which equals $1$ in case $(\slice,g)$ already is a Schwarzschild manifold} $\frac{3m_i}{r_i}$ in front of $\varphi_i$, Proposition \ref{prop} and in particular Equation \eqref{eq:prop} imply that $\psi$ has the same constant value on each side of $\surf_i$, and hence is well-defined and continuous across $\glue$.}

{The outward unit normal vector to $\glue$ with respect to the \schild side is given by $\widetilde{\nu}=\varphi_{i}(r_{i})\,\partial_{r}$.  In $\slice$, the outward unit normal is given by $\widetilde{\nu}=\nu$. By our choice of $\mu_i$, one verifies, using Proposition \ref{prop}, \eqref{def:mi}, and \eqref{def:mui}, that the normal derivative of $\psi$ is the same {\it positive} constant on both sides of $\glue$. By using the integral curves of $\nabla\psi$ on $M^3$ (respectively, on $I_i\times\surf_i$), this allows us to use $\psi$ as a smooth coordinate function in a (collared) neighborhood of $\glue$ in $\widetilde{M}^{3}$. In particular, with respect to the coordinate charts $(y^A, \psi)$ introduced below, $\psi$ is automatically smooth across $\surf_i$. Moreover, as $\widetilde{\nu}(\psi)\vert_{\surf_i}$ coincides from both sides, the normal $\widetilde{\nu}$ is in fact continuous and thus by smoothness even $C^{0,1}$ across $\surf_i$. }

Let $(y^{A})$ be local coordinates on $\glue$ and flow them to a neighborhood of $\glue$ in $\widetilde{M}^{3}$ along the level set flow defined by $\psi$. To show that $\widetilde{g}$ is $C^{1,1}$ across $\glue$, it then suffices to show that the components $\widetilde{g}_{AB}$, $\widetilde{g}_{A\psi}$, and $\widetilde{g}_{\psi\psi}$ are $C^{1,1}$ with respect to the coordinates $(y^{A},\psi)$ across the $\psi$-level set $\glue$ for all $A,B=1,2$. 

 As $\widetilde{\nu}(\psi)\vert_{\surf_i}$ coincides from both sides as discussed above, the normal $\widetilde{\nu}$ is in fact continuous and thus even $C^{0,1}$ across $\glue$. 
Continuity of $\widetilde g$ in $(y^{A},\psi)$ and smoothness in tangential directions along $\glue$ is then immediate as $\partial_{\psi}=\frac{1}{\widetilde{\nu}(\psi)}\,\widetilde{\nu}$, and thus
\begin{align}\label{continuous}
\widetilde{g}_{AB}&=r_{i}^{2}\,\Omega_{AB},\quad \widetilde{g}_{A\psi}=0, \quad \widetilde{g}_{\psi\psi}=\frac{1}{(\widetilde{\nu}(\psi))^{2}}
\end{align}
on $\surf_{i}(r_i)^{\pm}$ for all $A,B=1,2$ and from both sides. Moreover, we find that
\begin{align*}
\partial_{\psi}\left(\widetilde{g}_{AB}\right)&=\frac{2}{\widetilde{\nu}(\psi)}\,\widetilde{h}_{AB}
\end{align*}
holds on $\glue$, where $\widetilde{h}_{AB}$ is the second fundamental form induced on $\glue$ by $\widetilde{g}$. 
Proposition \ref{prop} ensures umbilicity of every component of any photon sphere. Also, it asserts that the mean curvature of every component of any photon sphere is determined by its area radius via \eqref{eq:prop2}, up to a sign. Hence, using \eqref{continuous}, we find that $\widetilde{h}=\pm\frac{1}{2}H_{i}\sigma_{i}=\pm\frac{1}{2}H_{i}\,r_{i}^{2}\,\Omega$ holds on both sides of the photon sphere gluing boundary component $\glue$. We still need to determine this a priori free sign: From the side of $\slice$, we actually know $H_{i}>0$, where $H_{i}$ is computed with respect to the unit normal pointing towards the asymptotic end. On the \schild side, the mean curvature of the photon sphere with respect to the unit normal $\widetilde{\nu}$ pointing towards infinity and thus into $\slice$, is also positive. Thus, in both cases $\widetilde{h}_{AB}$ and thus also $\partial_{\psi}\left(\widetilde{g}_{AB}\right)$ coincide from both sides of $\glue$ for all $A,B=1,2$.

By construction, $\widetilde{g}_{A\psi}=0$ not only on $\glue$ but also in a neighborhood of $\glue$ inside $\widetilde{M}^{3}$ so that $\partial_{\psi}(\widetilde{g}_{A\psi})=0$ on both sides of $\glue$ for $A=1,2$. It remains to be shown that $\partial_{\psi}(\widetilde{g}_{\psi\psi})$ coincides from both sides. Then, because $\widetilde{g}$ is smooth on both sides up to the boundary (by assumption), we will have proved that $\widetilde{g}$ is $C^{1,1}$ everywhere.

A direct computation using the level set flow equations for $\psi$ shows that 
\begin{align*}
\partial_{\psi}(\widetilde{g}_{\psi\psi})&=-2\,(\widetilde{\nu}(\psi))^{2}\,\,\my{\widetilde{g}}{\nabla}^{2}\psi(\widetilde{\nu},\widetilde{\nu})
\end{align*}
from both sides of $\glue$, where $\my{\widetilde{g}}{\nabla}^{2}\psi$ denotes the Hessian of $\psi$ with respect to $\widetilde{g}$. We already know that $\widetilde{\nu}(\psi)$ is continuous across $\glue$. But from \eqref{surflap} we have
\begin{align}\label{Hessian}
\my{\widetilde{g}}{\nabla}^{2}\psi(\widetilde{\nu},\widetilde{\nu})=\cancelto{0}{\mylap{\widetilde{g}}\psi}-\cancelto{0}{\mylap{\widetilde{\sigma_{i}\!}}\psi}-\widetilde{H}_{i}\,\widetilde{\nu}(\psi)=-\widetilde{H}_{i}\,\widetilde{\nu}(\psi),
\end{align}
on both sides of $\glue$, where $\widetilde{H}_{i}$ denotes the mean curvature induced by $\widetilde{g}$ with respect to $\widetilde{\nu}$, and $\mylap{\widetilde{g}}$ and $\mylap{\widetilde{\sigma_{i}}}$ denote the $3$- and $2$-dimensional Laplacians induced by $\widetilde{g}$ and $\widetilde{\sigma_{i}}:=\widetilde{g}\vert_{T\surf_{i}\times T\surf_{i}}$, respectively. (Here, we have used that $\psi$ is constant along $\glue$ and that $\psi$ is $\widetilde{g}$-harmonic on both sides of the photon sphere gluing boundary.) To conclude, we recall that $\widetilde{H}_{i}$ and $\widetilde{\nu}(\psi)$ are continuous across $\glue$ so that $\my{\widetilde{g}}{\nabla}^{2}\psi(\widetilde{\nu},\widetilde{\nu})$ and thus $\partial_{\psi}(\widetilde{g}_{\psi\psi})$ are continuous across $\glue$. Thus, $\widetilde{g}$ is $C^{1,1}$ across $\glue$. As $i\in\lbrace1,\dots,I\rbrace$ was arbitrary, $\widetilde{g}$ is indeed $C^{1,1}$ on all of $\widetilde{M}^{3}$. 

\subsection*{Step 2: Doubling}
Now, we rename $\widetilde{M}^{3}$ to $\widetilde{M}^{+}$, reflect $\widetilde{M}^{+}$ through $\mathfrak{B}$ to obtain $\widetilde{M}^{-}$, and  glue the two copies to each other along their shared minimal boundary~$\mathfrak{B}$. We thus obtain a new smooth manifold which we will call $\widetilde{M}^3$ by a slight abuse of notation. We define a metric on $\widetilde{M}^{3}$ by equipping both $\widetilde{M}^{\pm}$ with the metric $\widetilde{g}$ constructed in Step 1 and extend the function $\psi^{+}:=\psi$ constructed in Step 1 on $\widetilde{M}^{+}$ across $\mathfrak{B}$ by choosing $\psi^{-}:=-\psi^{+}$ on $\widetilde{M}^{-}$. Combined, we will again abuse notation and call the extended function $\psi:=\pm\psi^{+}$ on $\widetilde{M}^{\pm}$.\\

 \noindent \emph{Why $(\widetilde{M}^{3},\widetilde{g}\,)$ is smooth across $\mathfrak{B}$ and $\psi$ can be used as a smooth collar coordinate function near $\mathfrak{B}$.}  In contrast to \cite{BMuA}, this is in fact immediate because on each component $\lbrace2\mu_{i}\rbrace\times\surf_{i}$ of $\mathfrak{B}$, we are just gluing two \schild necks of the same mass $\mu_{i}$ to each other across their minimal surface boundaries (up to a constant factor in their lapse functions $\varphi_{i}$, see  \eqref{def:phi}). Indeed, $\psi$ is smooth across the horizon boundary $\mathfrak{B}$ as can be seen in isotropic coordinates. Smoothness of the metric $\widetilde{g}$ across $\mathfrak{B}$ then follows directly.\\

By construction, the doubled manifold $(\widetilde{M}^{3},\widetilde{g}\,)$ has two isometric asymptotically flat ends of mass $m$. It is geodesically complete as $(\slice,g)$ was assumed to be geodesically complete up to the boundary in Definition \ref{def:AFSVS}. Finally, we observe that $\psi$ is $\widetilde{g}$-harmonic away from $\glue$ by construction.

\subsection*{Step 3: Conformal transformation to a scalar flat, geodesically complete manifold with vanishing ADM-mass}
As in Bunting and Masood-ul-Alam \cite{BMuA}, we want to use $u:=\frac{1+\psi}{2}$ as a conformal factor on $\widetilde{M}^{3}$. However, in our situation it is not a priori evident that $u>0$, or in other words that $\psi>-1$ on $\widetilde{M}^{3}$. \\

\paragraph*{\emph{Why $\psi>-1$ holds on $\widetilde{M}^{3}$}.}
By the reflection symmetric definition of $\psi$ in Step 2, it suffices to show that $0\leq\psi<1$ in $\widetilde{M}^{+}$ or in other words $0\leq\psi<1$ on $M^{+}$ and on each neck $(I_{i}\times\surf_{i})^{+}$, $i\in\lbrace1,\dots,I\rbrace$. On $M^{+}=\slice$, $\psi=N>0$ and we know that $N\to1$ as $r\to\infty$. By the maximum principle for elliptic PDEs (see e.\,g.\ \cite{GT}), it thus suffices to show that $N_{i}<1$ on each boundary component $\surf_{i}$. Assume this was false. Let $i_{0}$ be such that $N_{i_{0}}=\max\lbrace N_{i}\;\vert\; i=1,\dots,I\rbrace\geq1$. Then, again by the maximum principle, $N$ attains its global maximum at $p_{0}\in\surf_{i_{0}}$ and thus $\nu(N)\vert_{p_{0}}\leq0$. However, we already know that $\nu(N)\vert_{p_{0}}=\nu(N)_{i_{0}}>0$, a contradiction. Thus $0<\psi=N<1$ on $\slice$.

On each neck $(I_{i}\times\surf_{i})^{+}=(I_{i}\times\surf_{i})$, $\psi=\frac{3 m_i}{r_i}\,\varphi_i$, where $\varphi_{i}$ is the Schwarzschild lapse function given by \eqref{def:phi}. It is easy to compute that $\varphi_{i}(I_{i})=\left[0,1/\sqrt{3}\,\right]$, so that $\psi(I_{i}\times\surf_{i})=\left[0,\sqrt{3}m_{i}/r_{i}=N_{i}\right]$, where we have used \eqref{eq:prop}, \eqref{eq:prop2}, and \eqref{def:mi}. But we have just argued why $N_{i}<1$ for all $i\in\lbrace1,\dots,I\rbrace$ and can thus conclude that $\vert\psi\vert<1$ everywhere on $\widetilde{M}$. \\

The above considerations allow us to define the conformal factor
\begin{align}\label{def:u}
 u:\widetilde{M}^3\to\R,\,u:=\frac{1+\psi}{2}>0
\end{align}
and a conformally transformed Riemannian metric
\begin{align}\label{def:ghat}
\widehat{g}&:=u^4\,\widetilde{g}
\end{align}
on $\widetilde{M}^3$. Away from the gluing surfaces, $u$ is smooth and $\widetilde{g}$-harmonic because $\psi$ is smooth and $\widetilde{g}$-harmonic there.  Since, in addition, $\widetilde{g}$ is smooth and scalar flat away from the gluing surfaces, the same holds true for $\widehat{g}$ as
\begin{align}
\my{\widehat{g}\!}{R}&=u^{-5}\left(\cancelto{0}{\my{\widetilde{g}\!}{R}}\,u+8\,\cancelto{0}{\mylap{\widetilde{g}}u}\,\,\,\,\,\right)=0.
\end{align}

Furthermore, $u$ and $\widehat{g}$ are $C^{1,1}$ across all gluing boundaries $\glue$, $i\in\lbrace1,\dots,I\rbrace$, because $\psi$ and $\widetilde{g}$ 
are $C^{1,1}$ there (by product rule and \eqref{def:u}, \eqref{def:ghat}).

Moreover, precisely as in \cite{BMuA}, $(M^+,\widehat{g}\,)$ is asymptotically flat with zero ADM-mass. 

Heuristically, assuming that $(\slice,g)$ is asymptotically Schwarzschildean, one easily sees that
\begin{align*}
\widehat{g}_{ij}&=u^4\, \widetilde{g}_{ij}=\left(\frac{1+\psi}{2}\right)^4 \widetilde{g}_{ij}=\left(\frac{1+N}{2}\right)^4 g_{ij}\\
&=\left(1-\frac{m}{2r}+\mathcal{O}_{k}\!\left(\frac{1}{r^2}\right)\right)^4 \left(\left(1+\frac{m}{2r}\right)^4\delta_{ij}+\mathcal{O}_{k}\!\left(\frac{1}{r^2}\right)\right)=\delta_{ij}+\mathcal{O}_{k}\!\left(\frac{1}{r^2}\right)
\end{align*}
as $r\to\infty$ in the original asymptotically flat coordinates of $(\slice,g)$. On the other hand, again precisely as in \cite{BMuA}, $(M^-,\widehat{g}\,)$ can be compactified by adding in a point $\infty$ at infinity because
\begin{align*}
\widehat{g}_{ij}&=u^4\, \widetilde{g}_{ij}=\left(\frac{1+\psi}{2}\right)^4 \widetilde{g}_{ij}=\left(\frac{1-N}{2}\right)^4 g_{ij}\\
&=\left(\frac{m}{2r}+\mathcal{O}_{k}\!\left(\frac{1}{r^2}\right)\right)^4 \left(\left(1+\frac{m}{2r}\right)^4\delta_{ij}+\mathcal{O}_{k}\!\left(\frac{1}{r^2}\right)\right)=\frac{m^4}{16r^4}\delta_{ij}+\mathcal{O}_{k}\!\left(\frac{1}{r^5}\right)
\end{align*}
as $r\to\infty$ in the original asymptotically flat coordinates of $(\slice,g)$. {Heuristically, again assuming that $(M^{3},g)$ is asymptotically Schwarzschildean, one can perform an inversion in the sphere via $R:=r^{-1}$ and $X^{i}:=\frac{x^{i}}{r^2}$ to find that }
\begin{align*}
\widehat{g}\left(\partial_{X^i},\partial_{X^j}\right)=\left(\frac{m}{2}\right)^4\delta_{ij}+\mathcal{O}_{k}(R) 
\end{align*}
as $R\to0$. This {heuristic argument illustrates why it is allowed to glue in a point $\infty$ at $R=0$ with $(\widehat{M}^{3}:=\widetilde{M}^3\cup\lbrace \infty\rbrace,\widehat{g}\,)$ sufficiently regular. By construction, $(\widehat{M}^{3},\widehat{g}\,)$ is geodesically complete.}

Summarizing, we now have constructed a geodesically complete, scalar flat Riemannian manifold $(\widehat{M}^{3},\widehat{g}\,)$ with one asymptotically flat end of vanishing ADM mass that is smooth away from some hypersurfaces and one point. {At the point $\infty$ as well as at all gluing surfaces, the regularity is precisely that encountered by \cite{BMuA}.}

\subsection*{Step 4: Applying the Positive Mass Theorem.}
In Steps 1-3, we have constructed the geodesically complete, scalar flat Riemannian manifold $(\widehat{M}^{3},\widehat{g}\,)$ with one asymptotically flat end of vanishing ADM mass in a manner similar to what is done in  \cite{BMuA}.  Moreover, as noted above, the regularity achieved is the same as that encountered in \cite{BMuA}.  As such,  their analysis fully applies and asserts that the (weak regularity) Positive Mass Theorem proved by Bartnik \cite{Bartnik} applies.  (See also Lee and LeFloch  \cite{LeeLefloch} where even weaker regularity is allowed.)

The rigidity statement of this (weak regularity) Positive Mass Theorem implies that $(\widehat{M}^{3},\widehat{g}\,)$ must be isometric to Euclidean space. This immediately shows that the photon sphere $\photo$ was connected and diffeomorphic to a cylinder over a sphere for topological reasons. Moreover, it allows us to deduce that $(\slice,g)$ must be conformally flat. It is well-known\footnote{and can be verified by a straightforward computation, see e.\,g.\ \cite{Wald} or \cite{BMuA}.} that the only conformally flat, maximally extended solution of the static vacuum equations \eqref{SMEvac1}, \eqref{SMEvac2} is the \schild solution \eqref{schwarzmetric}.

In particular, the lapse function $N$ is given by $N=\sqrt{1-2m/r}$ with $r$ the area radius along the level sets of $N$, and $m$ is the ADM-mass of $(\slice,g)$ as before. Equation \eqref{SMEvac3} then shows that $\mu_1=m$ from which we find that $r_1=3m$ and $\mathfrak{H}=\mathfrak{H}_1=\sqrt{3}/r_1=1/(\sqrt{3}m)$, using the algebraic relations \eqref{eq:prop}, \eqref{eq:prop2}. This proves the claim of Theorem \eqref{thm:main}.
\end{proof}

\section{The static $n$-body problem for very compact bodies and black holes}\label{sec:nbody}
The following theorem addresses the so-called ``static $n$-body problem'' in General Relativity, namely the question whether multiple suitably ``separated'' bodies and black holes can be in static equilibrium. So far, only limited progress has been made towards settling this question. Bunting and Masood-ul-Alam's result \cite{BMuA} on static black hole uniqueness can be re-interpreted as saying that there are no $n>1$ black holes in static equilibrium. M\"uller zum Hagen \cite{MzH} and Beig and Schoen \cite{BeigSchoen} showed that static $n$-body configurations cannot exist with $n>1$ in axisymmetry and in the presence of an (infinitesimal) mirror symmetry, respectively. Both exclude black holes from their considerations. Andersson and Schmidt \cite{AS} constructed static equilibrium configurations of multiple elastic bodies, albeit not very ``separated'' ones.

Our approach can handle both bodies and black holes simultaneously and makes no symmetry assumptions. However, we can only treat ``very compact'' bodies, namely bodies that are each so compact that they give rise to a photon sphere behind which they reside:

\begin{theorem}[No static configuration of $n$``very compact'' bodies and black holes]\label{thm:nbody}
There are no static equilibrium configurations of $n>1$ bodies and black holes in which each body is surrounded by its own photon sphere.
\end{theorem}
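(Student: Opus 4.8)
The plan is to reduce Theorem~\ref{thm:nbody} to Theorem~\ref{thm:main} by arguing that a static $n$-body equilibrium configuration in which every body sits behind its own photon sphere gives rise to a geometrostatic spacetime whose inner boundary is a (possibly disconnected) photon sphere, to which Theorem~\ref{thm:main} then applies. First I would make precise the notion of a ``static equilibrium configuration of bodies and black holes'': one has a static spacetime $(\mathfrak{L}^4,\mathfrak{g}) = (\R\times M^3, -N^2dt^2+g)$ which is asymptotically flat and vacuum \emph{outside} a compact region that is the disjoint union of the (closures of the) matter regions occupied by the $n$ bodies together with the black hole region. The hypothesis that each body is ``very compact'' means precisely that each matter body $B_j$ is contained in the region bounded by a photon sphere $\photo_j$ lying in the vacuum region exterior to it; likewise each black hole is bounded by its Killing horizon. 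Restricting attention to the exterior vacuum region $\spacet_{\mathrm{ext}}$ — the component of the spacetime lying outside all the photon spheres $\photo_1,\dots,\photo_n$ and outside all the black hole horizons — we obtain a geometrostatic spacetime whose inner boundary is $\photo := \bigsqcup_j \photo_j$, a generalized photon sphere in the sense of Definition~\ref{def:photo}, together with finitely many Killing horizon components.

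Second, I would invoke Theorem~\ref{thm:main} (in the form indicated by Remark~\ref{rem:multiple}, allowing additional Killing horizon boundary components treated exactly as in Bunting and Masood-ul-Alam~\cite{BMuA}) applied to this exterior region. The conclusion is that the exterior region is isometric to the region $\{r \ge 3m\}$ of a single \schild spacetime, and in particular that the photon sphere $\photo$ is \emph{connected} — so there is exactly one photon sphere component, hence at most one body — and there are no horizon boundary components. This directly contradicts the assumption $n > 1$: one cannot have two or more disjoint photon spheres $\photo_1, \photo_2$, each enclosing its own distinct body, bounding a connected exterior region isometric to exterior \schild. Hence no such configuration exists.

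The main obstacle — and the step requiring the most care — is the passage from the physical ``$n$-body configuration'' to the clean hypotheses of Theorem~\ref{thm:main}. One must verify that the exterior vacuum region genuinely has the structure of a geometrostatic spacetime with $M^3$ a manifold with (compact) inner boundary consisting of the photon spheres and horizons, that the asymptotic decay conditions \eqref{AF}--\eqref{NAF} hold on its single end, and — this is the subtle point inherited from Remark~\ref{rem:multiple} — that the doubling and conformal arguments of Steps~1--4 in the proof of Theorem~\ref{thm:main} go through unchanged when Killing horizon components are present, using that on a horizon $N = 0$ forces $H = 0$, $\free h = 0$, and constant induced scalar curvature, so that the horizon slices have exactly the geometry and regularity of the minimal boundary $\mathfrak{B}$ constructed in Step~1. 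One also needs that the region exterior to all bodies and horizons is connected and has exactly one asymptotically flat end, which follows from the topological censorship–type argument already sketched in the third remark after Lemma~\ref{untrapped}. Once these structural points are in place, the theorem is an immediate corollary, and I would present it as such.
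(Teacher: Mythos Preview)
Your proposal is correct and follows essentially the same approach as the paper: make precise the notion of static equilibrium, very compact body, and black hole, then excise the matter regions to obtain a geometrostatic spacetime with (possibly disconnected) photon sphere inner boundary, and apply Theorem~\ref{thm:main} (via Remark~\ref{rem:multiple} when horizons are present) to conclude connectedness of the photon sphere and hence $n=1$. The paper's own proof is terser --- it simply records the relevant definitions and invokes Theorem~\ref{thm:main} --- whereas you additionally spell out the structural verifications (single end, compact inner boundary, regularity at horizons) that are implicit in the paper's treatment.
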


To be specific, the term \emph{static equilibrium} is interpreted here as referring to a geometrostatic system $(\overline{M}^{3},\overline{g},\overline{N})$ as defined in Definition \ref{def:AFSVS}, geodesically complete up to (possibly) an inner boundary consisting of one or multiple \emph{black holes}, defined as sections of a Killing horizon (or in other words consisting of totally geodesic topological spheres satisfying $N=0$). Furthermore, a \emph{body} is meant to be a bounded domain $\Omega\subset\overline{M}^{3}$ where the static equations hold with a right hand side coming from the energy momentum tensor of a matter model satisfying the dominant energy condition $\my{\overline{g}}{\Scal}\geq0$. We consider a body $\Omega$ to be \emph{very compact} if it creates a photon sphere $\surf$ that, without loss of generality, arises as its boundary, $\surf=\partial\Omega$. Naturally, all bodies are implicitly assumed to be disjoint. Outside all bodies, the system is assumed to satisfy the static vacuum 
equations \eqref{SMEvac1}, \eqref{SMEvac3}.

Theorem \ref{thm:main} then applies to the spacetime $(\mathfrak{L}^{4}=\R\times\slice,\mathfrak{g}=-\overline{N}^{2}dt^{2}+\overline{g}\vert_{\slice})$, where $\slice:=\overline{M}^{3}\setminus\cup_{i=1}^{I}\Omega_{i}$, and $\emptyset\neq\Omega_{i}\subset\overline{M}^{3}$, $i\in\lbrace1,\dots,I\rbrace$, are all the bodies in the system. We appeal to Remark \ref{rem:multiple} if black holes are present in the configuration.\qed

\smallskip
Indeed, while photon spheres might be most well known from the vacuum \schild spacetime \eqref{schwarzmetric}, many astrophysical objects are believed to be surrounded by photon spheres, see e.\,g.\ \cite{yazadjiev} and references cited therein. To give an example of a static matter fill-in of the \schild region exterior to the photon sphere, we refer to the well-known \schild interior solution constructed as a perfect fluid, see for example \cite[p.~130ff.]{Frankel}. In this example, the interior fluid ball can be glued into any exterior region of a \schild spacetime of mass $m>0$ where the radius of the centered round gluing sphere has area radius $r$ satisfying the Buchdahl condition
\begin{align}
\frac{2m}{r}<\frac{8}{9}.
\end{align} 
This condition is easily satisfied by the photon sphere which has area radius $r=3m$.

\vfill\newpage
\bibliographystyle{amsplain}
\bibliography{photon-sphere}
\vfill
\end{document}